\newtheorem{theorem}{Theorem}
\newtheorem{corollary}{Corollary}
\newtheorem{proposition}{Proposition}
\newtheorem{lemma}{Lemma}
\newtheorem{definition}{Definition}
\newtheorem{conjecture}{Conjecture}
\def \T{\textup{T}}
\def \Res{\textup{Res}}
\def \diag{\textup{diag}}
\begin{document}
	\title{A general formula for walk determinants of rooted products with applications to DGS-graph constructions}
\author{Wei Wang$^a$\thanks{Corresponding author. Email address: wangwei.math@gmail.com} \quad Jie Shen$^a$ \quad Lihuan Mao$^b$ \\[2mm]
{\footnotesize  $^a$School of Mathematics, Physics and Finance, Anhui Polytechnic University, Wuhu 241000, China}\\
{\footnotesize  $^b$School of Mathematics and Data Science, Shaanxi University of Science and Technology, Xi’an	710021, China
}
}
\date{}

\maketitle
\begin{abstract}
	For an $n$-vertex graph $G$, and a rooted graph $H^{(v)}$ with $v$ as the root, the rooted product graph $G\circ H^{(v)}$ is obtained from $G$ and $n$ copies of $H$ by identifying the root of the $i$th copy of $H$  with the $i$th vertex of $G$ for each $i$. As a refinement of the controllability  criterion  of $G\circ H^{(v)}$ obtained recently by Shan and Liu (2025), we obtain an explicit formula for the determinant of the walk matrix of $G\circ H^{(v)}$. Furthermore, for an important family of graphs $\mathcal{F}$ that are determined by their generalized spectrum (DGS), we introduce the concept of $\mathcal{F}$-preservers and provide a sufficient condition for a rooted graph to be an $\mathcal{F}$-preserver. A list of $\mathcal{F}$-preservers of small order is provided, which leads to many new infinite families of DGS-graphs using rooted products.
\end{abstract}
		\noindent\textbf{Keywords:} walk matrix; rooted product graph;  generalized spectrum\\
	
	\noindent\textbf{Mathematics Subject Classification:} 05C50

\section{Introduction}
Let $G$ be a simple graph with $n$ vertices, and let $M(G)$ be a real symmetric matrix associated with $G$. Throughout this paper, $M(G)$ may refer to  adjacency matrix $A(G)$, signless Laplacian matrix $Q(G)$,  and the $A_\alpha$ matrix $A_\alpha(G)$. The characteristic polynomial of $M(G)$, defined as $\det (xI-M(G))$, is called the $M$-characteristic polynomial of $G$ and is denoted by $\phi(M(G);x)$.  The $M$-walk matrix of $G$, denoted by $W_M(G)$, or $W(M(G))$, is defined to be the matrix
\begin{equation*}
[e, M(G)e,\ldots, (M(G))^{n-1}e],
\end{equation*}
where  $e$ is the all-ones vector.  This particular kind of matrix has received increasing attention in spectral graph theory, as it has deep connection with many important properties of a graph. For example,  a theorem of Wang \cite{wang2017JCTB} states that if $2^{-\lfloor n/2\rfloor} \det(W_A(G))$ is odd and square-free, then $G$ is determined by its generalized spectrum (DGS). In particular, all graphs $G$ with $\det W_A(G)=\pm 2^{\lfloor n/2\rfloor}$ are DGS. For more studies on the walk matrices of graphs, we refer to \cite{godsil2012,liu2022,wang2021,qiu2019,rourke2016,wang2023Eujc}.

Let $H^{(v)}$ be a rooted graph with a root vertex $v$. The \emph{rooted product graph} of $G$ and $H^{(v)}$, denoted by $G\circ H^{(v)}$ (or simply $G\circ H$ when no confusion arises) is defined as follows: Take $n$ copies of $H^{(v)}$ and identify the root $v$ of the $i$th copy of $H^{(v)}$ with the $i$th vertex of $G$ for each $i$. See Fig.~\ref{c4} for an illustration. 
\begin{figure}
	\centering
	\includegraphics[height=3.6cm]{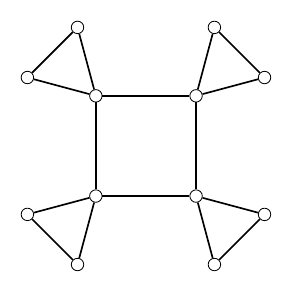}
	\caption{The rooted product graph $C_4\circ C_3$}
	\label{c4}
\end{figure} We are interested in the determinant of the walk matrix of the rooted product graph $G\circ H$. We may assume that $H$ is not the trivial graph $K_1$ as $G\circ K_1=G$. The first nontrivial result on $\det W(G\circ H)$ is obtained by Mao, Liu and Wang \cite{mao2015}, which we  state as the following theorem.

\begin{theorem}[\cite{mao2015}]\label{gp2}
	For any graph $G$,
	$$\det W_A(G\circ P_2)=\pm (\det A(G))(\det W_A(G))^2.$$
\end{theorem}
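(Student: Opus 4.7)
My plan is to exploit the recursive structure inherent in the walk matrix of a rooted product. With the natural vertex ordering, $A(G\circ P_2)=T:=\begin{pmatrix} A & I \\ I & 0 \end{pmatrix}$, where $A=A(G)$, and the all-ones vector of $G\circ P_2$ is $\binom{e}{e}$. A short induction shows $T^k\binom{e}{e}=\binom{u_k}{u_{k-1}}$, where $u_{-1}=u_0=e$ and $u_{k+1}=Au_k+u_{k-1}$; equivalently $u_k=p_k(A)e$ for the Fibonacci-type polynomials defined by $p_{-1}=p_0=1$ and $p_{k+1}(x)=xp_k(x)+p_{k-1}(x)$. So the $k$-th column of $W_A(G\circ P_2)$ is $\binom{p_k(A)e}{p_{k-1}(A)e}$.

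Next, by Cayley--Hamilton each $p_k(A)e$ lies in the Krylov subspace spanned by the columns of $W:=W_A(G)$, so I would reduce each $p_k$ modulo $\phi(A;x)$ and write $u_k=Wr_k$ for a coordinate vector $r_k\in\mathbb{R}^n$ (with $r_{-1}=(1,0,\dots,0)^\T$). Collecting these into $n\times 2n$ matrices $R=[r_0,\dots,r_{2n-1}]$ and $R'=[r_{-1},r_0,\dots,r_{2n-2}]$ gives
$$W_A(G\circ P_2)=\begin{pmatrix} W & 0 \\ 0 & W \end{pmatrix}\binom{R}{R'},$$
so that $\det W_A(G\circ P_2)=(\det W)^2\cdot\det\binom{R}{R'}$. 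The factor $(\det W_A(G))^2$ is produced for free, and the theorem reduces to the identity $\det\binom{R}{R'}=\pm\det A$.

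This last identity is the main obstacle. Splitting $\binom{R}{R'}$ into $n\times n$ blocks, the top-left block $R_1=[r_0,\dots,r_{n-1}]$ is upper triangular with unit diagonal (because $p_k$ is monic of degree $k$), hence invertible. A Schur complement on $R_1$ then reduces the problem to the determinant of an explicit $n\times n$ matrix built from the expansions of $p_n,\dots,p_{2n-1}$ in the basis $\{p_0,\dots,p_{n-1}\}$ modulo $\phi(A;x)$. I plan to identify this determinant (up to sign) with the determinant of multiplication by $x$ on the quotient ring $\mathbb{R}[x]/(\phi(A;x))$, which equals $\prod_i\lambda_i=\det A$. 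The cleanest verification is by specialization: assume $A$ is diagonalizable with distinct eigenvalues and $e$ is $A$-cyclic; the eigenvalues of $T$ then come in pairs $\mu^{\pm}_i$ satisfying $\mu^2-\lambda_i\mu-1=0$, and writing $[f_0,Tf_0,\dots,T^{2n-1}f_0]$ in the joint eigenbasis of $T$ factors it as (eigenvectors)$\cdot$(diagonal)$\cdot$(Vandermonde). Using $\mu^+_i\mu^-_i=-1$, the pairwise Vandermonde contributions collapse to $\prod_{i<j}(\lambda_j-\lambda_i)^2$ and the self-paired factors to $\pm\prod_i\lambda_i$, which yields exactly $(-1)^n(\det A)(\det W)^2$; the general case then follows because both sides are polynomial in the entries of $A$.
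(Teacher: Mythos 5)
Your proposal really contains two arguments, and the one that actually closes is essentially the paper's own. The operative proof --- passing to the eigenbasis of $T=A(G\circ P_2)$, whose eigenvalues come in pairs $\mu_i^{\pm}$ with $\mu^2-\lambda_i\mu-1=0$, factoring the walk matrix as (eigenvectors)$\cdot$(diagonal)$\cdot$(Vandermonde), and collapsing the cross terms via $(\mu_j^{t}-\mu_i^{+})(\mu_j^{t}-\mu_i^{-})=(\lambda_j-\lambda_i)\mu_j^{t}$ together with $\mu_j^{+}\mu_j^{-}=-1$ --- is precisely the route the paper takes for the general Theorem~\ref{main}: Lemma~\ref{basicW} is your eigenvector formula, Lemma~\ref{ppmu} is your cross-Vandermonde collapse (here $\Res(x^2-1,x)=-1$), and your ``self-paired factors'' are the paper's $\det W(B(\lambda_i))=-\lambda_i$, i.e.\ $h(\lambda)=-\lambda$ and $\det h(A(G))=(-1)^n\det A(G)$. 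One point of looseness there: the within-pair Vandermonde factors $\mu_i^{+}-\mu_i^{-}=\pm\sqrt{\lambda_i^2+4}$ do not by themselves give $\pm\prod_i\lambda_i$; you must group them with the per-pair overlaps $e_2^{\T}\zeta_i^{\pm}$ and $\det[\zeta_i^{+},\zeta_i^{-}]$ coming out of the eigenvector and diagonal matrices (this bookkeeping is what the paper's Lemmas~\ref{Vanmu} and~\ref{deteta1} organize), after which each pair contributes exactly $\det W(B(\lambda_i))=-\lambda_i$; the Zariski-density step to remove the genericity assumptions is then fine. By contrast, your first thread --- the Krylov factorization $W_A(G\circ P_2)=\diag(W,W)\cdot\binom{R}{R'}$ via the Fibonacci-type polynomials $p_k$ --- is a genuinely different and attractive idea: it extracts $(\det W_A(G))^2$ with no spectral hypotheses and reduces the theorem to the purely algebraic identity $\det\binom{R}{R'}=\pm\det A(G)$ over $\mathbb{R}[x]/(\phi(A;x))$. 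But as written it is only a plan: the identification of the Schur complement with the multiplication-by-$x$ map on the quotient ring is asserted rather than proved, and you ultimately fall back on the spectral computation to verify the whole statement anyway. If you want that first thread to stand on its own, the missing determinant identity is the piece you still owe.
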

Similar formulas for $\det W_A(G\circ P_3)$ and $\det W_A(G\circ P_4)$ were obtained later by Mao and Wang \cite{mao2022}, where the end vertex of $P_3$ (or $P_4$) is taken as the root. The general formula for $\det W_A(G\circ P_m)$ was conjectured by Mao and Wang \cite{mao2022} and proved by Wang, Yan and Mao \cite{wym2024}.   Recently,  Yan and Wang \cite{yan2026} further extended these results to any $ P_m^{(\ell)}$, where the vertices of $P_m$ are labeled naturally as $\{1,2,\ldots,m\}$ and  the root $\ell$ can be arbitrary.
\begin{theorem}[\cite{yan2026}]
	For any graph $G$ and integer $m\ge 2$,
	\begin{equation*}
		\det W_A(G\circ P_m^{(\ell)}) =
		\begin{cases}
			\pm (\det A(G))^{\lfloor\frac{m}{2}\rfloor}(\det W_A(G))^m, & \text{if $\gcd(\ell,m+1)=1$,} \\
			0,&\text{otherwise.}
		\end{cases}
	\end{equation*}
\end{theorem}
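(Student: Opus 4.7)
The plan is to exploit the tensor product structure
\begin{equation*}
A(G \circ P_m^{(\ell)}) = I_n \otimes C + B \otimes E, \qquad \mathbf{1}_{nm} = \mathbf{1}_n \otimes \mathbf{1}_m,
\end{equation*}
with $B = A(G)$, $C = A(P_m)$, and $E := e_\ell e_\ell^{\T}$, to decouple the contributions of $G$ and of $P_m^{(\ell)}$. Diagonalizing $B$ by an orthonormal matrix $\Phi$ with eigenvalues $\lambda_s$ and eigenvectors $\phi_s$, and writing $\mathbf{1}_n = \sum_s c_s \phi_s$, one computes $A^k \mathbf{1}_{nm} = \sum_s c_s\,\phi_s \otimes (C + \lambda_s E)^k \mathbf{1}_m$. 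Under the orthogonal change of basis $\Phi \otimes I_m$ (determinant $\pm 1$), the operator $A$ becomes block-diagonal with $s$-th block $C + \lambda_s E$ and the all-ones vector becomes the concatenation $(c_1\mathbf{1}_m,\ldots,c_n\mathbf{1}_m)^{\T}$, so the transformed walk matrix $\widetilde{W}$ has block-stacked columns governed by the local Krylov sequences.

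The vanishing case is quick: if $d := \gcd(\ell, m+1) > 1$, the $P_m$-eigenvector with $\psi_j = \sin(\pi j/d)$ satisfies $\psi_\ell = 0$ (since $d \mid \ell$), so the subspace $\mathbb{R}^n \otimes \psi$ is $A$-invariant and $A$ acts there as a scalar. The cyclic subspace generated by $\mathbf{1}_{nm}$ therefore intersects this $n$-dimensional subspace in dimension at most one, so $W_A(G\circ P_m^{(\ell)})$ is rank-deficient whenever $n \ge 2$, forcing $\det W_A(G\circ P_m^{(\ell)}) = 0$.

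In the nondegenerate case $\gcd(\ell, m+1) = 1$, a generic spectral decomposition of each block $C + \lambda_s E$ factors $\widetilde{W}$ as a product of a block-diagonal matrix with a Vandermonde in the eigenvalues of all blocks. A resultant computation, aided by the matrix determinant lemma identity $\phi_{C+\lambda E}(x) = \phi_{P_m}(x) - \lambda\,\phi_{P_{\ell-1}}(x)\phi_{P_{m-\ell}}(x)$, consolidates the Vandermonde factor together with $\prod_s c_s^m$ into $\pm(\det W_A(G))^m$ via the classical spectral identity $\det W_A(G) = \pm\prod_s c_s\prod_{s<t}(\lambda_t-\lambda_s)$. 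What remains is a product over $s$ of local walk determinants $\det W_A(C+\lambda_s E;\mathbf{1}_m)$, which I claim equals $\pm\lambda_s^{\lfloor m/2\rfloor}$.

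The main obstacle is the polynomial identity $\det W_A(C+\lambda E;\mathbf{1}_m) = \pm\lambda^{\lfloor m/2\rfloor}$ as an element of $\mathbb{Z}[\lambda]$ under $\gcd(\ell,m+1)=1$. I would establish it by using the three-term recurrence coming from the tridiagonal shape of $C$: this expresses each entry of the local walk matrix as an explicit polynomial in $\lambda$ with Chebyshev-type coefficients, and elementary row and column operations over $\mathbb{Z}[\lambda]$ should reduce the determinant to the claimed monomial, with the nondegeneracy condition ensuring a unit leading coefficient via non-vanishing of a Chebyshev polynomial at $\cos(\pi\ell/(m+1))$. Multiplying over $s$ then yields $\prod_s\lambda_s^{\lfloor m/2\rfloor} = (\det A(G))^{\lfloor m/2\rfloor}$, which, combined with the $(\det W_A(G))^m$ factor, completes the theorem.
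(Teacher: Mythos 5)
Your overall architecture is sound and is essentially the specialization to $H=P_m^{(\ell)}$ of the machinery this paper builds for the general case: block-diagonalize via the eigenvectors of $A(G)$ so that $A(G\circ P_m^{(\ell)})$ decomposes into blocks $C+\lambda_s E$, split the Vandermonde over all $nm$ eigenvalues into within-block and cross-block parts, identify the cross-block part with $(\det W_A(G))^m$ times a power of $\Res\bigl(\phi(A(P_m);x),\phi(A(P_{\ell-1});x)\phi(A(P_{m-\ell});x)\bigr)$, and reduce the rest to the local determinants $\det[e,B(\lambda)e,\ldots,B(\lambda)^{m-1}e]$; this is exactly Corollary \ref{dwt3} together with Lemmas \ref{Vanmu}, \ref{ppmu} and \ref{deteta1}. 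Your degenerate-case argument via the $P_m$-eigenvector $\psi_j=\sin(\pi j/d)$ vanishing at the root is correct and clean (the $n=1$ edge case is harmless, since $\det W_A(P_m)=0$ for $m\ge 2$ anyway).

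The problem is that the two facts carrying all of the content of the theorem are asserted rather than proved. First, you need $\Res\bigl(\phi(A(P_m);x),\phi(A(P_{\ell-1});x)\phi(A(P_{m-\ell});x)\bigr)=\pm 1$ whenever $\gcd(\ell,m+1)=1$; this is true (via the $\gcd$ structure of Chebyshev polynomials of the second kind) but is nowhere argued, and without it the factor $\Delta_1^{n(n-1)/2}$ does not disappear into the $\pm$ sign. Second, and decisively, the identity $\det[e,B(\lambda)e,\ldots,B(\lambda)^{m-1}e]=\pm\lambda^{\lfloor m/2\rfloor}$ in $\mathbb{Z}[\lambda]$ is the entire difficulty of the theorem: a priori this determinant is a polynomial of degree up to $\binom{m}{2}$, and the assertion that it collapses to a monomial of degree exactly $\lfloor m/2\rfloor$ with unit coefficient was a published conjecture of Mao and Wang for the endpoint root, settled only in \cite{wym2024}, with the general root $\ell$ being the main content of \cite{yan2026}. ``Elementary row and column operations over $\mathbb{Z}[\lambda]$ should reduce the determinant to the claimed monomial'' gives no mechanism for why the higher-degree terms cancel, and no known proof proceeds this simply. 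Note that the present paper does not prove this statement either: it quotes it from \cite{yan2026}, and even for the related family $H_{4t+1}$ it only observes that individual instances can be checked computationally from Theorem \ref{main}. So your proposal correctly reproduces the reduction that the paper already supplies, but leaves precisely the hard step as a hope.
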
 
In \cite{tian2025},  Tian et al. considered the $Q$-analog of  the problem and proved that 
\begin{equation}\label{q23}
\det W_Q(G\circ P_m)=\pm (\det Q(G))^{m-1} (\det W_Q(G))^2 \text{~for~} m=2,3.
\end{equation}
Yan, Wang and Mao \cite{yan2024} extended Eq.~\eqref{q23} for any $m\ge 2$.

Note that all the current results on $\det W(G\circ H^{(v)})$ are restricted to the case where $H$ is the  path graph. It seems natural to consider the general case, where $H$ is an arbitrary graph.  The main result of this paper is the following theorem, which  establishes a general formula for $\det W_M(G\circ H^{(v)})$.
\begin{theorem}\label{main}
	Let  $G$ be a graph of order $n$ and $H^{(v)}$ be a rooted graph of order $m$. Let $B(\lambda)=M(H)+\lambda D_v$ where $D_v$ is the diagonal matrix $\diag(0,\ldots,0,1,0,\ldots,0)$ with only nonzero entry (which is 1) corresponding to the root $v$. Let $M^{(v)}(H)$ be the matrix obtained from $M(H)$ by deleting the row and the column corresponding to $v$. Then we have
	\begin{equation*}
		\det W_M(G\circ H^{(v)})=\pm \left(\Res(\phi(M(H);x),\phi(M^{(v)}(H);x))\right)^{\frac{n(n-1)}{2}}\cdot\det h(M(G))\cdot\left(\det W_M(G)\right)^m,
	\end{equation*}
	where $h(\lambda)=\det W(B(\lambda))=\det[e, B(\lambda)e,\ldots, (B(\lambda))^{m-1}e]$.
\end{theorem}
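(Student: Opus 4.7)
The plan is to exploit the tensor-product structure of $M(G\circ H^{(v)})$. A case-by-case inspection ($M=A,Q,A_\alpha$) gives the key identity
\[M(G\circ H^{(v)})=I_n\otimes M(H)+M(G)\otimes D_v,\]
since the edges of $G$ attach only at the root of each copy of $H$, contributing both to the off-diagonal adjacency entries (via $D_v$) and, for $Q$ or $A_\alpha$, to the diagonal degrees in exactly the pattern $M(G)\otimes D_v$ encodes. Fix an orthogonal $P$ with $P^{\T}M(G)P=\Lambda=\diag(\mu_1,\ldots,\mu_n)$. Conjugation by $P\otimes I_m$ then block-diagonalizes $M(G\circ H^{(v)})$ as $\diag(B(\mu_1),\ldots,B(\mu_n))$ and carries $\mathbf{1}_{nm}=\mathbf{1}_n\otimes\mathbf{1}_m$ to $c\otimes\mathbf{1}_m$, where $c=P^{\T}\mathbf{1}_n$.

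Next I would exhibit a clean factorization of the walk matrix in this new basis. The $i$th row-block of the transformed $W_M(G\circ H^{(v)})$ is $c_i[\mathbf{1}_m,B(\mu_i)\mathbf{1}_m,\ldots,B(\mu_i)^{nm-1}\mathbf{1}_m]$. By Cayley--Hamilton, $B(\mu_i)^k\mathbf{1}_m=W(B(\mu_i))\cdot r_{i,k}$, where $r_{i,k}$ is the coefficient vector of $x^k\bmod\phi(B(\mu_i);x)$. Stacking the blocks produces
\[W_M(G\circ H^{(v)})=(P\otimes I_m)\cdot\diag\bigl(c_1 W(B(\mu_1)),\ldots,c_n W(B(\mu_n))\bigr)\cdot\tilde R,\]
where $\tilde R$ is the $nm\times nm$ matrix whose $i$th row-block is $[r_{i,0},r_{i,1},\ldots,r_{i,nm-1}]$. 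Taking determinants gives
\[\det W_M(G\circ H^{(v)})=\pm\Bigl(\prod_{i=1}^n c_i\Bigr)^{\!m}\cdot\det h(M(G))\cdot\det\tilde R,\]
since $\prod_i h(\mu_i)=\det h(M(G))$ by the spectral mapping theorem.

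To evaluate $\det\tilde R$, I would invoke the classical polynomial-Vandermonde trick. Let $V_i$ be the $m\times m$ Vandermonde in the roots of $\phi(B(\mu_i);x)$. Then $\diag(V_1,\ldots,V_n)\cdot\tilde R$ is the $nm\times nm$ Vandermonde in all $nm$ roots of $\prod_i\phi(B(\mu_i);x)$, so splitting the big Vandermonde into within-block and cross-block differences yields $\det\tilde R=\pm\prod_{i<j}\Res(\phi(B(\mu_i);x),\phi(B(\mu_j);x))$. The matrix-determinant lemma applied to $B(\mu)=M(H)+\mu\,e_ve_v^{\T}$ gives $\phi(B(\mu);x)=\phi(M(H);x)-\mu\,\phi(M^{(v)}(H);x)$. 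Substituting into $\Res(f,g)=\prod_{\alpha:\,f(\alpha)=0}g(\alpha)$ and noting that $\phi(M(H);\alpha)=\mu_i\,\phi(M^{(v)}(H);\alpha)$ whenever $\phi(B(\mu_i);\alpha)=0$ yields
\[\Res\bigl(\phi(B(\mu_i);x),\phi(B(\mu_j);x)\bigr)=(\mu_i-\mu_j)^m\,\Res\bigl(\phi(M(H);x),\phi(M^{(v)}(H);x)\bigr).\]

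Finally, the accumulated factors $\prod_{i<j}(\mu_i-\mu_j)^m$ and $(\prod_i c_i)^m$ coalesce into $\pm(\det W_M(G))^m$ via the standard identity $\det W_M(G)=\pm\prod_i c_i\cdot\prod_{i<j}(\mu_j-\mu_i)$, and the stated formula drops out. The main delicate point will be handling degenerate configurations in which $M(G)$ has a repeated eigenvalue or two polynomials $\phi(B(\mu_i);x)$ share a root; in such cases both sides vanish, and since both are polynomial in the entries of $M(G)$ and $M(H)$, the identity extends from the generic simple-spectrum case by a standard density argument. Sign bookkeeping is absorbed throughout into the $\pm$.
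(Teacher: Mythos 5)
Your argument is correct and arrives at the stated formula, but it is organized differently from the paper's proof, so a comparison is worthwhile. The common core is identical: both proofs split the product of differences of the $nm$ eigenvalues of $M(G\circ H^{(v)})$ into within-block and cross-block parts, and both evaluate the cross-block part through $\phi(B(\lambda);x)=\phi(M(H);x)-\lambda\,\phi(M^{(v)}(H);x)$, yielding $\Res\bigl(\phi(B(\lambda_{i_1});x),\phi(B(\lambda_{i_2});x)\bigr)=(\lambda_{i_2}-\lambda_{i_1})^m\,\Res\bigl(\phi(M(H);x),\phi(M^{(v)}(H);x)\bigr)$, which is exactly Lemma \ref{ppmu}. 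Where you diverge is in how the walk determinant is accessed: the paper applies the eigenvector formula of Lemma \ref{basicW} to the full $nm\times nm$ matrix and then needs Lemma \ref{deteta1} to factor the determinant of the eigenvector matrix $[\eta_i^{(j)}]$, whereas you conjugate by $P\otimes I_m$ to block-diagonalize $M(G\circ H^{(v)})$ into $\diag(B(\mu_1),\ldots,B(\mu_n))$ (your $\mu_i$ are the paper's $\lambda_i$) and factor the walk matrix directly as $(P\otimes I_m)\cdot\diag\bigl(c_1W(B(\mu_1)),\ldots,c_nW(B(\mu_n))\bigr)\cdot\tilde R$ via Cayley--Hamilton, evaluating $\det\tilde R$ by the polynomial--Vandermonde trick. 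Your route buys a cleaner, purely matrix-theoretic derivation that bypasses Lemmas \ref{basicW} and \ref{deteta1} entirely; its price is that the step $\det\tilde R=\pm\prod_{i<j}\Res\bigl(\phi(B(\mu_i);x),\phi(B(\mu_j);x)\bigr)$ requires each $\phi(B(\mu_i);x)$ to have simple roots (otherwise the block Vandermonde you divide by is singular), so you must fall back on the continuity/density argument you sketch at the end. That argument is legitimate --- both sides of the final identity are, in absolute value, continuous functions of the entries of $M(G)$ and $M(H)$, and the generic case is dense in the space of symmetric matrices --- but it should be stated as a claim about arbitrary real symmetric matrices rather than adjacency matrices of graphs, since one cannot perturb within the class of graphs. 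The paper's eigenvector formula degrades gracefully in the degenerate case (its numerator simply vanishes), so it needs no limiting argument. One cosmetic remark: you write $M(G\circ H^{(v)})=I_n\otimes M(H)+M(G)\otimes D_v$ while the paper uses $M(H)\otimes I_n+D_v\otimes M(G)$; these are permutation-similar, and since a permutation matrix fixes the all-ones vector this changes $\det W_M$ only by a sign.
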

A graph $G$ is \emph{$M$-controllable} if $\det W(M(G))\neq 0$. Similarly, an $n\times n$ matrix $B$ is \emph{controllable} if $\det W(B)\neq 0$. Note that for two polynomials $f$ and $g$, the resultant $\Res(f,g)$ is nonzero if and only if $\gcd(f,g)=1$. Also note that for any matrix $M$ and any polynomial $h(x)$, it holds that $\det h(M)=\prod h(\lambda)$, where $\lambda$ runs through all eigenvalues of $M$.  An immediate consequence of Theorem \ref{main} is  the following necessary and sufficient condition for controllability of $G\circ H^{(v)}$, which was obtained by Shan and  Liu \cite{shan}.
\begin{corollary}
	The graph $G\circ H^{(v)}$ is $M$-controllable if and only if the following three conditions hold simultaneously:\\
	\noindent\textup{(i)} $G$ is $M$-controllable;\\
	\noindent\textup{(ii)} $\gcd(\phi(M(H);x),\phi(M^{(v)}(H);x))=1$; and\\
	\noindent\textup{(iii)} for any eigenvalue $\lambda$ of $M(G)$, the matrix $B(\lambda)$ is controllable.
	\end{corollary}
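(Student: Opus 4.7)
The plan is to read the corollary directly off the factorization in Theorem~\ref{main}. By definition, $G\circ H^{(v)}$ is $M$-controllable precisely when $\det W_M(G\circ H^{(v)})\ne 0$, and Theorem~\ref{main} expresses this determinant (up to sign) as the product of three factors. Since a product of real numbers is nonzero if and only if each factor is nonzero, the task reduces to matching each of the three factors with one of the conditions (i), (ii), (iii).

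The matching is straightforward. The factor $(\det W_M(G))^m$ is nonzero if and only if $\det W_M(G)\ne 0$ (using $m\ge 1$), which is condition (i). For $n\ge 2$ the exponent $n(n-1)/2$ is positive, so the resultant factor is nonzero if and only if $\Res(\phi(M(H);x),\phi(M^{(v)}(H);x))\ne 0$; by the standard characterization $\Res(f,g)\ne 0\Leftrightarrow \gcd(f,g)=1$ recalled right after Theorem~\ref{main}, this is exactly condition (ii). For the middle factor, I invoke the identity $\det p(M(G))=\prod_\lambda p(\lambda)$, where $\lambda$ ranges over the eigenvalues of $M(G)$ with multiplicity (also noted immediately after Theorem~\ref{main}). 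Applied with $p=h$, this yields $\det h(M(G))=\prod_\lambda h(\lambda)=\prod_\lambda \det W(B(\lambda))$, which is nonzero if and only if $B(\lambda)$ is controllable for every eigenvalue $\lambda$ of $M(G)$, i.e., condition (iii). Combining the three equivalences gives the claimed biconditional.

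I do not anticipate any substantial obstacle: the corollary is essentially a one-step unpacking of Theorem~\ref{main}, as the text already hints. The only point worth a brief remark is the degenerate case $n=1$, where $G=K_1$, $G\circ H^{(v)}$ coincides with $H$, and the resultant exponent $n(n-1)/2$ vanishes so that the first factor is automatically $1$; in that situation the equivalence can be verified directly from the $n=1$ instance of the formula, so the three stated conditions remain jointly equivalent to $M$-controllability in this corner case as well.
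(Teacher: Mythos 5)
Your proposal is correct and matches the paper's own (implicit) argument exactly: the paper presents this corollary as an immediate consequence of Theorem~\ref{main}, after recalling precisely the two facts you invoke, namely $\Res(f,g)\neq 0 \Leftrightarrow \gcd(f,g)=1$ and $\det h(M)=\prod_{\lambda} h(\lambda)$ over the eigenvalues of $M$. Your factor-by-factor matching, together with the remark on the degenerate case $n=1$, is a faithful (and slightly more careful) unpacking of the same reasoning.
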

We are most interested in Theorem \ref{main} for the case  where $M(G)$ refers to the adjacency matrix $A(G)$, as in this setting it gives a way to construct larger DGS-graphs from smaller ones, according to the aforementioned theorem of Wang \cite{wang2017JCTB}. It is worthwhile to mention that this construction fails in the setting of generalized $Q$ spectrum; see \cite{tian2025}.

 For convenience, we introduce an important family of DGS-graphs.
\begin{definition}
	For an integer $n$, we define $\mathcal{F}_n$ to be the collection of all $n$-vertex graphs $G$ such that $\det A(G)=\pm 1$ and $\det W_A(G)=\pm 2^{\lfloor n/2\rfloor}$. Define $\mathcal F=\bigcup \mathcal{F}_n$.
	\end{definition}
	We note that $\mathcal{F}_n$ is empty when $n$ is odd, since in this case $\det A(G)\equiv 0\pmod{2}$, contradicting the requirement that  $\det A(G)=\pm 1$. On the other hand,  for all even $n\ge 6$, the families $\mathcal{F}_n$ are nonempty; see  \cite[Fig. 1]{liu2019}.
	
We call a rooted graph $H^{(v)}$ an \emph{$\mathcal{F}$-preserver} if $G\circ H^{(v)}\in \mathcal{F}$ for any $G\in \mathcal{F}$. Since any graph in $\mathcal{F}$ is DGS by the theorem of Wang \cite{wang2017JCTB}, it is straightforward to use $\mathcal{F}$-preservers to construct an infinite family of DGS-graphs. Indeed, let $G$ be any graph in $\mathcal{F}$ and $\{H_i^{(v_i)}\}$ be a sequence of $\mathcal{F}$-preservers, not necessarily distinct. Then all graphs in the family 
$$G, G\circ H_1^{(v_1)}, (G\circ H_1^{(v_1)})\circ H_2^{(v_2)},\ldots$$
belong to the family $\mathcal{F}$ and hence are DGS. The second  main result of this paper is a sufficient condition for a rooted graph to be an $\mathcal{F}$-preserver,  which we state as the following theorem using notation of Theorem \ref{main}.
\begin{theorem}\label{main2}
	A rooted graph $H^{(v)}$ is an $\mathcal{F}$-preserver if the following three conditions hold simultaneously:\\		
		\noindent{\textup{(i)}} $\Res(\phi(A(H);x),\phi(A^{(v)}(H);x))=\pm 1$;\\
		\noindent{\textup{(ii)}}  $\det[e,B(\lambda)e,\ldots,(B(\lambda))^{m-1}e] =\pm \lambda^k$ for some nonnegative integer $k$; and\\
		\noindent{\textup{(iii)}}  either $\det A(H)=\pm 1$ and $\det A^{(v)}(H)=0$, or $\det A(H)=0$ and $\det A^{(v)}(H)=\pm 1$.		
\end{theorem}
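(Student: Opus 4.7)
The goal is to fix an arbitrary $G\in\mathcal{F}_n$ and verify that $G\circ H^{(v)}\in \mathcal{F}_{nm}$, which means checking the two identities $\det A(G\circ H^{(v)})=\pm 1$ and $\det W_A(G\circ H^{(v)})=\pm 2^{\lfloor nm/2\rfloor}$. First I would note that $\mathcal{F}_n$ is empty for odd $n$, so we may assume $n$ is even; then $nm$ is even and $\lfloor nm/2\rfloor=m\lfloor n/2\rfloor$. This parity identity is what will make the exponent match when we apply Theorem~\ref{main}.

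To handle the adjacency determinant I would use the block decomposition
\[A(G\circ H^{(v)})=I_n\otimes A(H)+A(G)\otimes D_v,\]
which is immediate from the definition of the rooted product. Orthogonally diagonalizing $A(G)=U\Lambda U^{\T}$ and conjugating by $U\otimes I_m$ turns this matrix into a block-diagonal matrix with diagonal blocks $B(\lambda_1),\ldots,B(\lambda_n)$, giving $\det A(G\circ H^{(v)})=\prod_i\det B(\lambda_i)$. A single cofactor expansion of $B(\lambda)=A(H)+\lambda D_v$ along row $v$, together with $A(H)_{vv}=0$, produces the polynomial identity
\[\det B(\lambda)=\det A(H)+\lambda\det A^{(v)}(H).\]
Condition~(iii) then forces $\det B(\lambda)$ to be either $\pm 1$ or $\pm\lambda$. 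In the first case the product is $\pm 1$ directly; in the second case it equals $\pm\det A(G)=\pm 1$, using $G\in\mathcal{F}$.

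For the walk determinant, Theorem~\ref{main} already breaks $\det W_A(G\circ H^{(v)})$ into three factors, and each is killed by one of the three hypotheses. Condition~(i) makes the resultant factor $\pm 1$. Condition~(ii) states that $h(\lambda)=\varepsilon\lambda^k$ for some fixed sign $\varepsilon\in\{\pm 1\}$, so $\det h(A(G))=\varepsilon^n(\det A(G))^k=\pm 1$. Finally $(\det W_A(G))^m=\pm 2^{m\lfloor n/2\rfloor}=\pm 2^{\lfloor nm/2\rfloor}$ by the parity observation. Multiplying the three factors yields the desired value.

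The key structural point is that the three conditions of Theorem~\ref{main2} have been engineered to kill, one for one, the three factors appearing in Theorem~\ref{main}, while condition~(iii) simultaneously delivers the auxiliary identity $\det A(G\circ H^{(v)})=\pm 1$. So I do not expect a genuine obstacle: the only non-tautological ingredient is the cofactor identity for $\det B(\lambda)$, which is a one-line calculation, and the small parity check $\lfloor nm/2\rfloor=m\lfloor n/2\rfloor$, which holds precisely because $n$ must be even.
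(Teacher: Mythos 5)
Your proposal is correct and follows essentially the same route as the paper: the paper likewise reduces the walk-determinant claim to Theorem~\ref{main} with conditions (i)--(iii) each neutralizing one factor (using $\lfloor nm/2\rfloor = m\,n/2$ for even $n$), and handles $\det A(G\circ H^{(v)})=\pm 1$ via $\prod_i \det B(\lambda_i)$ together with the identity $\det B(\lambda)=\det A(H)+\lambda\det A^{(v)}(H)$ (its Proposition~\ref{dm} and Corollary~\ref{suf}). The only cosmetic difference is that you obtain $\det A(G\circ H^{(v)})=\prod_i\det B(\lambda_i)$ by conjugating the Kronecker decomposition with $U\otimes I_m$, whereas the paper invokes its eigenvalue lemma (Lemma~\ref{evev}); both are equivalent one-line arguments.
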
 
\section{Proofs of Theorems \ref{main} and \ref{main2}}
Let $G$ be an $n$-vertex graph and $H^{(v)}$ be an $m$-vertex rooted graph with root $v$. We define $B(\lambda)=M(H)+\lambda D_v$, where $D_v$ is the diagonal matrix $\diag(0,\ldots,0,1,0,\ldots,0)$ with the number $1$ corresponding to the root $v$. Let $M^{(v)}(H)$ be the matrix obtained from $M(H)$ by deleting the row and the column corresponding to the root vertex $v$. We mention that for adjacency matrix $A(G)$, the matrix $A^{(v)}(H)$ equals $A(H-v)$, where $H-v$ is the vertex-deleted subgraph of $H$. However, the corresponding equality does not hold for $Q(H)$ in general. 

 It was observed that $M(G\circ H^{(v)})=M(H)\otimes I_n+D_v\otimes M(G)$ \cite{shan},  where $\otimes$ denotes the Kronecker product of matrices. The following lemma characterizes the $M$-eigenvalues and $M$-eigenvectors of $G\circ H^{(v)}$, generalizing the well-known result for the  $A$-eigenvalues of $G\circ H^{(v)}$; see \cite{schwenk1974,godsil1978,gutman1980}.

\begin{lemma}[\cite{shan}]\label{evev}
	Let $\lambda_1,\ldots,\lambda_n$ be the eigenvalues of $M(G)$ with corresponding eigenvectors $\xi_1,\ldots,\xi_n$. Let $\mu_i^{(j)}$ $(j\in \{1,\ldots,m\})$ be the eigenvalues of $B(\lambda_i)=M(H)+\lambda_i D_v$ with corresponding eigenvectors $\zeta_{i}^{(j)}$. Denote $\tilde{M}=M(G\circ H^{(v)})$ and $\eta_i^{(j)}=\zeta_i^{(j)}\otimes \xi_i$. Then $\tilde{M} \eta_i^{(j)}=\mu_i^{(j)}\eta_{i}^{(j)}$ for $i\in\{1,\ldots,n\}$ and $j\in\{1,\ldots,m\}$.
\end{lemma}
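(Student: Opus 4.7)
The plan is to bootstrap everything off the block/Kronecker identity
\[
\tilde M \;=\; M(H)\otimes I_n + D_v\otimes M(G),
\]
which is recorded in the paragraph immediately preceding the lemma. Given this, the statement becomes a one-line exercise in the mixed-product property of the Kronecker product, and I would introduce no additional machinery.

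Concretely, I would apply $\tilde M$ to $\eta_i^{(j)}=\zeta_i^{(j)}\otimes\xi_i$ term by term. Using $(A\otimes B)(x\otimes y)=(Ax)\otimes(By)$, the first summand contributes $(M(H)\zeta_i^{(j)})\otimes\xi_i$, and the second contributes $(D_v\zeta_i^{(j)})\otimes(M(G)\xi_i)=\lambda_i(D_v\zeta_i^{(j)})\otimes\xi_i$, where the eigenvalue equation $M(G)\xi_i=\lambda_i\xi_i$ is invoked. Adding the two and pulling $\xi_i$ out of the right tensor slot gives
\[
\tilde M\eta_i^{(j)} \;=\; \bigl[(M(H)+\lambda_i D_v)\zeta_i^{(j)}\bigr]\otimes\xi_i \;=\; (B(\lambda_i)\zeta_i^{(j)})\otimes\xi_i \;=\; \mu_i^{(j)}\,\zeta_i^{(j)}\otimes\xi_i \;=\; \mu_i^{(j)}\eta_i^{(j)},
\]
which is precisely the claim.

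There is essentially no serious obstacle beyond bookkeeping. The one point that deserves attention is the ordering convention in the Kronecker product: namely, that vertices of $G\circ H^{(v)}$ are indexed by pairs $(u,i)$ with $u$ a vertex of $H$ and $i$ a vertex of $G$, so that $M(H)\otimes I_n$ encodes the adjacencies inside each copy of $H$ while $D_v\otimes M(G)$ encodes the $G$-adjacencies between the roots of different copies (and not $M(G)\otimes D_v$, which would correspond to the opposite indexing). Once this convention is fixed, the eigen-identity is forced, and no case analysis over the three choices of $M$ (adjacency, signless Laplacian, or $A_\alpha$) is needed, since every step uses only the abstract algebraic structure of $\tilde M$ and not which specific matrix it is.
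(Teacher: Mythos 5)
Your proof is correct and is exactly the argument one expects: the paper itself gives no proof of this lemma (it is quoted from Shan and Liu \cite{shan}), but it records the decomposition $\tilde M = M(H)\otimes I_n + D_v\otimes M(G)$ immediately beforehand, and your application of the mixed-product property together with bilinearity of the Kronecker product is the standard and complete verification. Your remark about the indexing convention (why the second summand is $D_v\otimes M(G)$ rather than $M(G)\otimes D_v$) is the only point of substance, and you handle it correctly.
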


\begin{proposition}\label{dm}
	$\det M(G\circ H^{(v)})=\det((\det M(H))\cdot I_n+(\det M^{(v)}(H))\cdot M(G)).$
\end{proposition}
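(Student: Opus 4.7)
The plan is to reduce $\det M(G\circ H^{(v)})$ to a product over the eigenvalues of $M(G)$ by exploiting the spectral description of $\tilde{M}=M(G\circ H^{(v)})$ furnished by Lemma \ref{evev}, and then to recognize the resulting product as the desired determinant.

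First, since $M(G)$ and each $B(\lambda_i)$ are real symmetric, I would choose the eigenvectors $\xi_1,\ldots,\xi_n$ of $M(G)$ to be orthonormal and, for each $i$, the eigenvectors $\zeta_i^{(1)},\ldots,\zeta_i^{(m)}$ of $B(\lambda_i)$ to be orthonormal as well. Lemma \ref{evev} then supplies $nm$ eigenvectors $\eta_i^{(j)}=\zeta_i^{(j)}\otimes\xi_i$ of $\tilde{M}$, which are straightforwardly linearly independent and therefore account for the full spectrum with multiplicity. Grouping eigenvalues by the index $i$ yields
$$\det\tilde{M}=\prod_{i=1}^n\prod_{j=1}^m\mu_i^{(j)}=\prod_{i=1}^n\det B(\lambda_i).$$

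The next step is to compute $\det B(\lambda)$ explicitly. Writing $D_v=e_v e_v^{\T}$ as an obvious rank-one matrix, the matrix determinant lemma gives
$$\det B(\lambda)=\det(M(H)+\lambda e_v e_v^{\T})=\det M(H)+\lambda\cdot e_v^{\T}\operatorname{adj}(M(H))e_v=\det M(H)+\lambda\det M^{(v)}(H),$$
where the last equality uses that the $(v,v)$-entry of $\operatorname{adj}(M(H))$ is the corresponding cofactor $\det M^{(v)}(H)$. Plugging this linear polynomial back into the product above and invoking the elementary identity $\det(\alpha I_n+\beta M(G))=\prod_{i=1}^n(\alpha+\beta\lambda_i)$ with $\alpha=\det M(H)$ and $\beta=\det M^{(v)}(H)$ delivers the stated formula.

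The only point that deserves any real care is the assertion that the $nm$ eigenvectors produced by Lemma \ref{evev} truly exhaust the spectrum of $\tilde M$ with the correct multiplicities; this is where the symmetry of $M(G)$ and of each $B(\lambda_i)$ is essential. If one preferred to bypass spectral input altogether, one could instead work directly with the block structure $\tilde{M}=M(H)\otimes I_n+D_v\otimes M(G)$, arrange $\tilde M$ as an $m\times m$ array of commuting $n\times n$ blocks, and evaluate its determinant via a block Schur-complement expansion — but the spectral route sketched above is conceptually the shortest.
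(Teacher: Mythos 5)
Your proof is correct and follows essentially the same route as the paper: both reduce $\det M(G\circ H^{(v)})$ to $\prod_i \det B(\lambda_i)$ via the eigenvectors of Lemma \ref{evev}, expand $\det B(\lambda)=\det M(H)+\lambda\det M^{(v)}(H)$ (you via the matrix determinant lemma, the paper via multilinearity --- the same computation), and identify the product with $\det((\det M(H))I_n+(\det M^{(v)}(H))M(G))$. Your extra care in checking that the $\eta_i^{(j)}$ form a complete eigenbasis is a welcome touch the paper leaves implicit, but it does not change the argument.
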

\begin{proof}
	Using the notation of Lemma \ref{evev}, we have 
	$$\det B(\lambda_i)=\det (M(H)+\lambda_i D_v)=\det M(H)+\lambda_i\det M^{(v)}(H)$$
	by the multilinearity of the determinant. Since the determinant of a matrix equals the product of all its eigenvalues, we find that
$$\det M(G\circ H^{(v)})=\prod_{i=1}^{n}\prod_{j=1}^m \mu_i^{(j)}=\prod_{i=1}^{n}\det B(\lambda_i)=\prod_{i=1}^{n}(\det M(H)+\lambda_i\det M^{(v)}(H)).$$
Let $M'=(\det M(H))\cdot I_n+(\det M^{(v)}(H))\cdot M(G)$. As $M(G)$ has eigenvalues $\lambda_1,\ldots,\lambda_n$, we see that the eigenvalues of $M'$ are exactly $\det M(H)+\lambda_i\det M^{(v)}(H)$. It follows that 
$$\prod_{i=1}^{n}(\det M(H)+\lambda_i\det M^{(v)}(H))=\det M'.$$ Thus, $\det M(G\circ H^{(v)})=\det M'$, completing the proof.
\end{proof}
The following corollary is immediate from Proposition \ref{dm}.
\begin{corollary}\label{suf} If the rooted graph $H^{(v)}$ satisfies \textup{(i)} $\det A(H)=\pm 1$ and $\det A^{(v)}(H)=0$; or \textup{(ii)} $\det  A(H)=0$ and $\det A^{(v)}(H)=\pm1$,
	then  $\det A(G\circ H^{(v)})=\pm 1$ for any graph $G$ with $\det A(G)=\pm 1$. 
\end{corollary}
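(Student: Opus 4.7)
The plan is to derive this corollary as an essentially immediate consequence of Proposition \ref{dm} applied to the adjacency matrix, i.e.\ with $M=A$. That proposition gives
$$\det A(G\circ H^{(v)})=\det\bigl((\det A(H))\cdot I_n+(\det A^{(v)}(H))\cdot A(G)\bigr),$$
so the strategy is simply to substitute the hypotheses on $\det A(H)$ and $\det A^{(v)}(H)$ into the right-hand side and simplify.

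I would then split into the two cases. In case (i), where $\det A(H)=\pm1$ and $\det A^{(v)}(H)=0$, the $A(G)$-term inside the determinant vanishes, so the right-hand side reduces to $\det(\pm I_n)=(\pm1)^n\in\{\pm1\}$, independent of the specific graph $G$. In case (ii), where $\det A(H)=0$ and $\det A^{(v)}(H)=\pm1$, the identity term vanishes instead, leaving $\det(\pm A(G))=(\pm1)^n\det A(G)$; combined with the hypothesis $\det A(G)=\pm1$, this again yields $\pm1$. Either way the conclusion $\det A(G\circ H^{(v)})=\pm1$ follows.

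There is no serious obstacle here: the real work was done in proving Proposition \ref{dm} via the eigenvalue description of Lemma \ref{evev} together with multilinearity of the determinant in the two rows corresponding to the root. The corollary is essentially a bookkeeping step isolating a pair of hypotheses on $H^{(v)}$ that ensure the rooted product preserves the property $\det A(\cdot)=\pm1$. This is precisely what will be invoked as condition (iii) of Theorem \ref{main2}, so no additional machinery needs to be developed at this stage.
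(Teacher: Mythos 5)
Your proposal is correct and matches the paper exactly: the paper derives this corollary as an immediate consequence of Proposition \ref{dm} with $M=A$, and your two-case substitution (yielding $\det(\pm I_n)=\pm1$ and $\det(\pm A(G))=(\pm1)^n\det A(G)=\pm1$ respectively) is precisely the intended argument.
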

\begin{definition}\normalfont{
		Let $f(x)=a_nx^n+a_{n-1}x^{n-1}+\cdots+a_1x+a_0$ and $g(x)=b_mx^m+b_{m-1}x^{m-1}+\cdots+b_1x+b_0$. The resultant of $f(x)$ and $g(x)$, denoted by  $\Res(f(x),g(x))$, is defined to be
		\begin{equation}\label{defres}
		a_n^mb_m^n\prod_{1\le i\le n,1\le j\le m}(\alpha_i-\beta_j),
		\end{equation}
		where $\alpha_i$'s and $\beta_j$'s are the roots (in complex field $\mathbb{C}$) of $f(x)$ and $g(x)$, respectively.
	}
\end{definition}
We note that Eq.~\eqref{defres} can be rewritten as 
\begin{equation}\label{defresequ}
	a_n^m\prod_{1\le i\le n}g(\alpha_i), \text{~or~}  	(-1)^{mn}b_m^n\prod_{1\le j\le m}f(\beta_j).
\end{equation}
We shall use the following basic property of the resultant, which is clear from the second expression in Eq.~\eqref{defresequ}.
\begin{lemma}\label{basicres}
	Let $f(x)=a_nx^n+\cdots+a_0=a_n\prod_{i=1}^n(x-\alpha_i)$ and $g(x)=b_mx^m+\cdots+b_0=b_m\prod_{j=1}^m(x-\beta_j)$. If $m<n$,  then $\Res(f(x)+tg(x),g(x))=\Res(f(x),g(x))$ for any $t\in \mathbb{C}$.
\end{lemma}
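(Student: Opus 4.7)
The plan is to apply directly the second form of the resultant given in \eqref{defresequ}, namely
$$\Res(p(x),g(x)) = (-1)^{\deg p\cdot m}\,b_m^{\deg p}\prod_{j=1}^m p(\beta_j),$$
taking $p(x)=f(x)$ for the right-hand side and $p(x)=f(x)+tg(x)$ for the left-hand side. The entire lemma will then fall out of the observation that $g(\beta_j)=0$ for each $j$, once we check that the two applications of the formula use the same exponent and the same leading coefficient.

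The only place where care is needed, and the key use of the hypothesis $m<n$, is in justifying that $f(x)+tg(x)$ still has degree exactly $n$ with leading coefficient $a_n$. Since $\deg g=m<n$, the leading term $a_nx^n$ of $f$ is not cancelled by any term of $tg(x)$, so the degree and leading coefficient of $f+tg$ match those of $f$. Consequently the prefactor $(-1)^{mn}b_m^n$ and the number of factors $m$ in the product are identical on both sides of the asserted identity.

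With these matchings verified, the left-hand side becomes
$$\Res(f+tg,g) \;=\; (-1)^{mn}b_m^n\prod_{j=1}^m\bigl(f(\beta_j)+t\,g(\beta_j)\bigr) \;=\; (-1)^{mn}b_m^n\prod_{j=1}^m f(\beta_j),$$
using $g(\beta_j)=0$ for $1\le j\le m$. The right-hand expression is exactly $\Res(f,g)$ by \eqref{defresequ}, which proves the claim.

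I do not expect a genuine obstacle here; the statement is essentially an immediate corollary of the product formula for the resultant once the degree bookkeeping is handled. The only subtlety worth flagging for the reader is why the hypothesis $m<n$ is necessary: without it, adding $tg$ could alter the leading coefficient (or even the degree) of $f$, which would change the prefactor $b_m^{\deg(f+tg)}$ and invalidate the one-line computation above.
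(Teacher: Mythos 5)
Your proof is correct and follows exactly the route the paper intends: the paper simply asserts the lemma is ``clear from the second expression in Eq.~\eqref{defresequ}'', and your argument supplies precisely the details behind that claim, including the one point that actually needs checking (that $m<n$ keeps the degree and leading coefficient of $f+tg$ equal to those of $f$, so the prefactor $(-1)^{mn}b_m^n$ is unchanged).
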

Now we turn to the calculation of $\det W(G\circ H^{(v)})$.  The basic tool is the  following formula which gives the determinant of walk matrix $W(M)$ using the  eigenvalues and eigenvectors of $M$. 
\begin{lemma}[\cite{mao2015}]\label{basicW}
	Let $M$ be an $n\times n$ real symmetric matrix. Let $\lambda_i$ be the eigenvalues of $M$ with corresponding eigenvectors $\xi_i$ for $i=1,2,\ldots,n$. Then
	$$\det W(M)= \frac{\prod_{1\le i_1< i_2\le n}(\lambda_{i_2}-\lambda_{i_1})\prod_{1\le i\le n}(e_n^\T \xi_i)}{\det[\xi_1,\xi_2,\ldots,\xi_n]}.$$
\end{lemma}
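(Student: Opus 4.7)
The plan is to diagonalize $M$ and obtain a Vandermonde-style factorization of $W(M)$. Since $M$ is real symmetric, it is diagonalizable, so I write $M = X \Lambda X^{-1}$ with $X = [\xi_1,\ldots,\xi_n]$ and $\Lambda = \diag(\lambda_1,\ldots,\lambda_n)$. First I would dispose of the degenerate case: if two eigenvalues coincide, then the minimal polynomial of $M$ has degree less than $n$, so every column of $W(M)$ lies in the $\mathbb{R}[M]$-invariant subspace spanned by $e, Me, \ldots, M^{\deg m_M - 1} e$, forcing $\det W(M) = 0$; the right-hand side also vanishes through the Vandermonde factor. From now on I assume the $\lambda_i$ are pairwise distinct and $X$ is invertible.

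Next I would expand the all-ones vector in the eigenbasis: let $c := X^{-1} e$, so $e = \sum_i c_i \xi_i$ and $M^k e = \sum_i c_i \lambda_i^k \xi_i$. This gives the factorization
\[
W(M) = X \cdot \diag(c_1,\ldots,c_n) \cdot V,
\]
where $V$ is the Vandermonde matrix with $V_{ij} = \lambda_i^{j-1}$. Taking determinants yields
\[
\det W(M) = (\det X)\Bigl(\prod_{i=1}^{n} c_i\Bigr) \prod_{1 \le i_1 < i_2 \le n}(\lambda_{i_2} - \lambda_{i_1}).
\]

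To match this with the stated formula, it remains to establish the identity $(\det X)^2 \prod_i c_i = \prod_i (e_n^\T \xi_i)$. The cleanest route is a scale-invariance argument: under the rescaling $\xi_i \mapsto \alpha_i \xi_i$, the quantity $\det X$ picks up a factor $\alpha_i$, the coefficient $c_i$ picks up $\alpha_i^{-1}$, and $e_n^\T \xi_i$ picks up $\alpha_i$, so both sides of the desired identity scale by the same factor $\prod_i \alpha_i$. Hence it suffices to verify the identity for one convenient normalization; choosing the $\xi_i$ to be orthonormal gives $X^{-1} = X^\T$, so that $c_i = \xi_i^\T e = e_n^\T \xi_i$ and $(\det X)^2 = \det(X X^\T) = 1$, which makes the identity trivial. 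The main obstacle is conceptual rather than computational: recognizing that the implicitly defined coefficients $c_i$ produced by the Vandermonde factorization can be rewritten in the symmetric form $e_n^\T \xi_i / \det X$ only after one notices the scaling invariance and reduces to the orthonormal case.
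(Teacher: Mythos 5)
Your proof is correct; the paper itself does not prove this lemma (it is quoted from \cite{mao2015}), and your argument is essentially the standard one used there: factor $W(M)=X\,\diag(c_1,\ldots,c_n)\,V$ with $V$ Vandermonde, handle repeated eigenvalues separately, and identify $(\det X)^2\prod_i c_i=\prod_i e_n^\T\xi_i$. Your scale-invariance reduction to the orthonormal case is a clean way to establish that last identity (equivalently, one can note that distinct-eigenvalue eigenvectors of a real symmetric matrix are automatically orthogonal, so $c_i=\xi_i^\T e/\|\xi_i\|^2$ and $(\det X)^2=\prod_i\|\xi_i\|^2$, giving the identity directly).
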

Let $\Omega=\{(i,j)\colon\,1\le i\le n \text{~and~} 1\le j\le m\}$ with the colexicographical order: $(i_1,j_1)<(i_2,j_2) $ if either $j_1<j_2$, or $j_1=j_2$ and $i_1<i_2$.  The following formula for $\det W(G\circ H^{(v)})$ is an immediate consequence of Lemmas \ref{evev}  and \ref{basicW}.
\begin{corollary}\label{dwt3}
	\begin{equation*}
		\det W(G\circ H^{(v)})= \frac{\prod_{(i_1,j_1)<(i_2,j_2)}(\mu_{i_2}^{(j_2)}-\mu_{i_1}^{(j_1)})\prod_{(i,j)\in \Omega}(e_{mn}^\T \eta_i^{(j)})}{\det[\eta_1^{(1)},\ldots,\eta_n^{(1)};\ldots;\eta_1^{(m)},\ldots,\eta_n^{(m)}]}.
	\end{equation*}
\end{corollary}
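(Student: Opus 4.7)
The plan is to invoke Lemma \ref{basicW} directly with $M$ taken to be $\tilde{M}:=M(G\circ H^{(v)})$, which is a real symmetric matrix of order $mn$. The only thing I need in order to apply Lemma \ref{basicW} is a complete list of eigenvalue--eigenvector pairs of $\tilde{M}$, and this is exactly what Lemma \ref{evev} supplies: the pairs $(\mu_i^{(j)},\eta_i^{(j)})$ as $(i,j)$ ranges over $\Omega$ constitute $mn$ such pairs. Moreover these $mn$ eigenvectors are linearly independent, because the $\xi_i$ form a basis (being eigenvectors of the symmetric matrix $M(G)$) and, for each fixed $i$, so do the $\zeta_i^{(j)}$; hence their tensor products $\eta_i^{(j)}=\zeta_i^{(j)}\otimes\xi_i$ span $\mathbb{R}^{mn}$.

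To match notation, I will linearly order $\Omega$ by the colexicographical order defined just before the corollary, i.e.\ with $j$ as the slower-changing index. Under this identification, the matrix $[\eta_1^{(1)},\ldots,\eta_n^{(1)};\ldots;\eta_1^{(m)},\ldots,\eta_n^{(m)}]$ is precisely the eigenvector matrix appearing in the denominator of Lemma \ref{basicW}, the pairwise-difference product $\prod_{1\le r_1<r_2\le mn}(\lambda_{r_2}-\lambda_{r_1})$ becomes $\prod_{(i_1,j_1)<(i_2,j_2)}(\mu_{i_2}^{(j_2)}-\mu_{i_1}^{(j_1)})$, and the factor $\prod_{r}(e_n^\T \xi_r)$ becomes $\prod_{(i,j)\in\Omega}(e_{mn}^\T \eta_i^{(j)})$. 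The dimension of the all-ones vector grows from $n$ to $mn$ simply because $\tilde{M}$ has order $mn$. Substituting these identifications into the statement of Lemma \ref{basicW} yields the claimed formula on the nose.

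I do not anticipate any real obstacle; the content is essentially bookkeeping of indices. The only cautionary point worth mentioning is the degenerate case in which some eigenvalues $\mu_i^{(j)}$ coincide: then Lemma \ref{basicW} returns $0$, which is consistent with the fact that repeated eigenvalues force $\det W(\tilde{M})=0$ (since the minimal polynomial of $\tilde{M}$ has degree strictly less than $mn$, so the columns of $[e,\tilde{M}e,\ldots,\tilde{M}^{mn-1}e]$ are linearly dependent). Hence both sides of the claimed identity vanish and the formula persists in this case.
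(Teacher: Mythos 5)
Your proposal is correct and matches the paper exactly: the paper also obtains Corollary \ref{dwt3} as an immediate consequence of applying Lemma \ref{basicW} to $M(G\circ H^{(v)})$ with the eigenpairs $(\mu_i^{(j)},\eta_i^{(j)})$ supplied by Lemma \ref{evev}, ordered colexicographically. Your added remarks on linear independence of the $\eta_i^{(j)}$ and on the degenerate case are sound but not needed beyond what the paper asserts.
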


\begin{lemma}[\cite{yan2026}]\label{Vanmu}
	\begin{equation*}
		\prod_{(i_1,j_1)<(i_2,j_2)}(\mu_{i_2}^{(j_2)}-\mu_{i_1}^{(j_1)})=\pm	\left(\prod_{i=1}^{n}\prod_{1\le j_1<j_2\le m}\left(\mu_i^{(j_2)}-\mu_i^{(j_1)}\right)\right)\left(\prod_{1\le i_1< i_2\le n}	\prod_{j_2=1}^{m}\prod_{j_1=1}^{m}\left(\mu_{i_2}^{(j_2)}-\mu_{i_1}^{(j_1)}\right)\right).
	\end{equation*}
\end{lemma}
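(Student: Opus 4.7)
The plan is to view both sides as Vandermonde-type products over all $\binom{mn}{2}$ unordered pairs of distinct elements of $\Omega=\{(i,j)\colon 1\le i\le n,\,1\le j\le m\}$. Any product of the form $\prod_{\{p,q\}}(\mu_p-\mu_q)$ over unordered pairs is determined up to an overall sign by the choice of ordering within each pair, so once I show that both the LHS and the RHS produce such a product, the identity follows up to $\pm 1$, which is precisely what the lemma asserts.

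First I would observe that the colexicographical order is a total order on $\Omega$, so the LHS contributes exactly one factor $\mu_{q}-\mu_{p}$ per unordered pair, taking $q$ to be the colex-larger element. Next I would analyze the RHS by splitting an arbitrary unordered pair $\{(i_1,j_1),(i_2,j_2)\}$ according to whether its first coordinates agree. If $i_1=i_2=i$ then necessarily $j_1\ne j_2$, and after relabeling so that $j_1<j_2$ the factor $\mu_i^{(j_2)}-\mu_i^{(j_1)}$ appears exactly once inside the first inner product on the RHS. If $i_1\ne i_2$, after relabeling so that $i_1<i_2$ the factor $\mu_{i_2}^{(j_2)}-\mu_{i_1}^{(j_1)}$ appears exactly once inside the second inner product, where $(j_1,j_2)$ ranges independently over $\{1,\ldots,m\}^2$. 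Finally, a simple count $n\binom{m}{2}+\binom{n}{2}m^2=\binom{mn}{2}$ confirms that no pair has been missed or double-counted, so the factor multisets on the two sides coincide and the identity follows up to sign.

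The task is essentially combinatorial bookkeeping rather than algebra, so the only real obstacle is verifying that the case split on the RHS is both exhaustive and non-overlapping; once that is done, no sign tracking is needed because the lemma only requires equality up to $\pm$. I expect no surprises, and the same method would work for any other total ordering on $\Omega$ that is compatible with the layered ``same-$i$ versus different-$i$'' decomposition appearing on the RHS.
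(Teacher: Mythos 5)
Your argument is correct and complete: both sides are products of $\mu_p-\mu_q$ over unordered pairs $\{p,q\}\subseteq\Omega$, your case split (same first coordinate versus different first coordinates) is exhaustive and disjoint, and the count $n\binom{m}{2}+\binom{n}{2}m^2=\binom{mn}{2}$ confirms the factor multisets agree up to sign. Note that the paper itself gives no proof of this lemma --- it is imported from \cite{yan2026} --- so there is nothing to compare against, but your pair-matching argument is the natural one and needs no further detail; in particular it is insensitive to possible coincidences among the $\mu_i^{(j)}$, since the matching is between index pairs rather than values.
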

\begin{lemma}\label{ppmu}
	\begin{equation*}
		\prod_{j_2=1}^{m}\prod_{j_1=1}^{m}\left(\mu_{i_2}^{(j_2)}-\mu_{i_1}^{(j_1)}\right)=(\lambda_{i_2}-\lambda_{i_1})^m\Res(\phi(M(H);x),\phi(M^{(v)}(H);x)).
	\end{equation*}
\end{lemma}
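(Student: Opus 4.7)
The plan is to recognize the double product as a resultant of two characteristic polynomials, then reduce that resultant to $\Res(\phi(M(H);x),\phi(M^{(v)}(H);x))$ times an easily identifiable factor depending on $\lambda_{i_1}$ and $\lambda_{i_2}$. Throughout, let $\psi(x)=\phi(M(H);x)$ and $\chi(x)=\phi(M^{(v)}(H);x)$, which are monic of degrees $m$ and $m-1$ respectively, and let $\phi_i(x)=\det(xI-B(\lambda_i))$, which is monic of degree $m$ with roots $\mu_i^{(1)},\ldots,\mu_i^{(m)}$.

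First, I would show the key identity
$$\phi_i(x)=\psi(x)-\lambda_i\,\chi(x).$$
This follows by splitting the row of $xI-M(H)-\lambda_i D_v$ indexed by $v$ as a sum of the corresponding row of $xI-M(H)$ and $-\lambda_i$ times the $v$-th standard unit row, then applying multilinearity of the determinant in that row: the first piece contributes $\det(xI-M(H))=\psi(x)$, and cofactor expansion along the unit row in the second piece produces $\det(xI-M^{(v)}(H))=\chi(x)$.

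Next, since both $\phi_{i_1}$ and $\phi_{i_2}$ are monic, the resultant formula of Eq.~\eqref{defres} gives immediately
$$\prod_{j_2=1}^{m}\prod_{j_1=1}^{m}\bigl(\mu_{i_2}^{(j_2)}-\mu_{i_1}^{(j_1)}\bigr)=\Res(\phi_{i_2},\phi_{i_1}).$$
Using the form $\Res(f,g)=\prod g(\alpha_k)$ (for $f$ monic with roots $\alpha_k$), I would evaluate $\phi_{i_1}$ at each root $\mu_{i_2}^{(k)}$ of $\phi_{i_2}$. The relation $\phi_{i_2}(\mu_{i_2}^{(k)})=0$ gives $\psi(\mu_{i_2}^{(k)})=\lambda_{i_2}\chi(\mu_{i_2}^{(k)})$, so
$$\phi_{i_1}(\mu_{i_2}^{(k)})=\psi(\mu_{i_2}^{(k)})-\lambda_{i_1}\chi(\mu_{i_2}^{(k)})=(\lambda_{i_2}-\lambda_{i_1})\,\chi(\mu_{i_2}^{(k)}).$$
Taking the product over $k$ yields
$$\Res(\phi_{i_2},\phi_{i_1})=(\lambda_{i_2}-\lambda_{i_1})^m\,\Res(\phi_{i_2},\chi).$$

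Finally, since $\phi_{i_2}=\psi-\lambda_{i_2}\chi$ and $\deg\chi=m-1<m=\deg\psi$, Lemma~\ref{basicres} gives $\Res(\phi_{i_2},\chi)=\Res(\psi,\chi)=\Res(\phi(M(H);x),\phi(M^{(v)}(H);x))$, completing the proof. The calculation is essentially a pleasant three-line manipulation of resultants; the only place that requires attention is the sign/orientation — one must evaluate $\phi_{i_1}$ at the roots of $\phi_{i_2}$ (rather than the other way around) to land on the sign $(\lambda_{i_2}-\lambda_{i_1})^m$ as stated, and one must verify that the degree hypothesis in Lemma~\ref{basicres} is satisfied so that subtracting $\lambda_{i_2}\chi$ from $\psi$ does not change the resultant.
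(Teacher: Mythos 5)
Your proposal is correct and follows essentially the same route as the paper: both establish $\phi(B(\lambda_i);x)=\phi(M(H);x)-\lambda_i\phi(M^{(v)}(H);x)$, evaluate $\phi_{i_1}$ at the roots of $\phi_{i_2}$ using the vanishing relation to pull out the factor $(\lambda_{i_2}-\lambda_{i_1})^m$, and then invoke Lemma~\ref{basicres} (with the degree hypothesis $\deg\phi(M^{(v)}(H);x)<\deg\phi(M(H);x)$) to reduce to $\Res(\phi(M(H);x),\phi(M^{(v)}(H);x))$. The only cosmetic difference is that you name the double product as $\Res(\phi_{i_2},\phi_{i_1})$ up front, while the paper manipulates the products directly.
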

\begin{proof}
	Note that $\phi(B(\lambda_i);x)=\det(xI-(M(H)+\lambda_iD_v))=\phi(M(H);x)-\lambda_i\phi(M^{(v)}(H);x)$, which is a monic polynomial with roots $\mu_i^{(j)}$, $j=1,\ldots,m$. We find that
	$$\phi(M(H);x)-\lambda_i\phi(M^{(v)}(H);x)=\prod_{j_1=1}^m(x-\mu_i^{(j_1)}),$$
	and hence
		\begin{equation}\label{phim}
		\phi(M(H);\mu_{i_2}^{(j_2)})-\lambda_{i_1}\phi(M^{(v)}(H);\mu_{i_2}^{(j_2)})=\prod_{j_1=1}^m(\mu_{i_2}^{(j_2)}-\mu_{i_1}^{(j_1)}).
	\end{equation}
	As $\phi(M(H);\mu_{i_2}^{(j_2)})-\lambda_{i_2}\phi(M^{(v)}(H);\mu_{i_2}^{(j_2)})=0$, we see that the left-hand side of Eq.~\eqref{phim} equals 
		$$(\lambda_{i_2}-\lambda_{i_1})\phi(M^{(v)}(H);\mu_{i_2}^{(j_2)}),$$
		implying
		\begin{equation}\label{uu}
			\prod_{j_2=1}^{m}\prod_{j_1=1}^{m}\left(\mu_{i_2}^{(j_2)}-\mu_{i_1}^{(j_1)}\right)=\prod_{j_2=1}^{m}(\lambda_{i_2}-\lambda_{i_1})\phi(M^{(v)}(H);\mu_{i_2}^{(j_2)})=(\lambda_{i_2}-\lambda_{i_1})^m\prod_{j_2=1}^m \phi(M^{(v)}(H);\mu_{i_2}^{(j_2)}).
		\end{equation}
		Finally, by the definition of the resultant, we have
			\begin{eqnarray}
		\prod_{j_2=1}^m \phi(M^{(v)}(H);\mu_{i_2}^{(j_2)})&=&\Res(\phi(M(H);x)-\lambda_{i_2}\phi(M^{(v)}(H);x),\phi(M^{(v)}(H);x))\nonumber \\
			&=&\Res(\phi(M(H);x),\phi(M^{(v)}(H);x)),\nonumber 
		\end{eqnarray}
		where the last equality follows from Lemma \ref{basicres} and the  fact $\deg \phi(M^{(v)}(H);x)<\deg \phi(M(H);x)$.
		This, together with Eq.~\eqref{uu}, completes the proof of Lemma \ref{ppmu}.	
\end{proof}
\begin{lemma}\label{deteta1}
	\begin{equation*}
		\det[\eta_1^{(1)},\ldots,\eta_n^{(1)};\ldots;\eta_1^{(m)},\ldots,\eta_n^{(m)}]= \left(\det[\xi_1,\xi_2,\ldots,\xi_n]\right)^m \prod_{i=1}^{n}\det [\zeta_i^{(1)},\zeta_i^{(2)},\ldots,\zeta_i^{(m)}].
	\end{equation*}
\end{lemma}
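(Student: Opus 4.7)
The plan is to factor the $mn\times mn$ matrix $M:=[\eta_1^{(1)},\ldots,\eta_n^{(1)};\ldots;\eta_1^{(m)},\ldots,\eta_n^{(m)}]$ into manageable pieces via two successive column permutations whose signs will turn out to cancel. First, I would apply a column permutation $\sigma$ to $M$ that regroups columns by the index $i$, so that the $i$-th block of $m$ consecutive columns of the reordered matrix $M_1$ is $[\eta_i^{(1)},\ldots,\eta_i^{(m)}]$. The Kronecker identity $(A\otimes B)(C\otimes D)=AC\otimes BD$ gives $\zeta_i^{(j)}\otimes\xi_i=(I_m\otimes\xi_i)\zeta_i^{(j)}$, so the $i$-th block factors as $(I_m\otimes\xi_i)\,Z_i$ where $Z_i:=[\zeta_i^{(1)},\ldots,\zeta_i^{(m)}]$. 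Consequently,
$$M_1 \;=\; N\cdot \textup{blockdiag}(Z_1,\ldots,Z_n), \qquad N:=[\,I_m\otimes\xi_1,\; I_m\otimes\xi_2,\; \ldots,\; I_m\otimes\xi_n\,],$$
and $\det M_1=\det N\cdot\prod_{i=1}^n\det Z_i$.

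Next, I would reduce $\det N$ to $(\det[\xi_1,\ldots,\xi_n])^m$ up to a sign. The $k$-th column of $I_m\otimes\xi_i$ is $e_k\otimes\xi_i$, so the columns of $N$ are indexed by pairs $(i,k)$ with $1\le i\le n$, $1\le k\le m$. I would apply a second column permutation $\tau$ that regroups them by $k$ instead of by $i$; after $\tau$, the matrix becomes a block diagonal matrix with $m$ diagonal blocks, each equal to $X:=[\xi_1,\ldots,\xi_n]$. Therefore $\det N=\textup{sgn}(\tau)\,(\det X)^m$, and combining with the previous step yields
$$\det M \;=\; \textup{sgn}(\sigma)\,\textup{sgn}(\tau)\,(\det X)^m \prod_{i=1}^{n}\det Z_i.$$

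The only delicate point is the sign bookkeeping, which I expect to be the main (though still routine) obstacle. Writing both permutations in coordinates on the $m$-by-$n$ index grid, $\sigma$ sends position $(j-1)n+i$ to position $(i-1)m+j$, while $\tau$ sends position $(i-1)m+k$ to position $(k-1)n+i$; a direct check shows $\tau\circ\sigma=\textup{id}$, so $\tau=\sigma^{-1}$ and their signs are reciprocal. Thus $\textup{sgn}(\sigma)\,\textup{sgn}(\tau)=1$, and the identity in the lemma holds exactly as stated, with no extra sign. Since both permutations are just the two directions of matrix transposition on a rectangular index grid, this cancellation is natural and does not depend on any special structure of the eigenvectors.
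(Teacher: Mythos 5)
Your proof is correct and follows essentially the same route as the paper's: both rest on factoring the big eigenvector matrix into $\diag(U,\dots,U)$ (with $U=[\xi_1,\dots,\xi_n]$) times a block structure built from the $Z_i=[\zeta_i^{(1)},\dots,\zeta_i^{(m)}]$ via the Kronecker mixed-product identity, with the interleaving permutation contributing no net sign. The only cosmetic difference is that the paper absorbs that permutation as a permutational \emph{similarity} of a block matrix of diagonal matrices (so the sign cancellation is automatic), whereas you track the two column permutations explicitly and verify $\tau=\sigma^{-1}$ — the same fact in different clothing.
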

\begin{proof}
	Let $E^{(j)}=[\eta_1^{(j)},\eta_2^{(j)},\ldots,\eta_n^{(j)}]$ for $j\in \{1,\ldots,m\}$. Write $\zeta_i^{(j)}=(z^{(j)}_{i,1},z^{(j)}_{i,2},\ldots,z^{(j)}_{i,m})^\T$. Then,  we have
	\begin{eqnarray}\label{ej}
		E^{(j)}&=&\left[\zeta_1^{(j)}\otimes \xi_1,\zeta_2^{(j)}\otimes \xi_2,\ldots,
		\zeta_n^{(j)}\otimes \xi_n \right]\nonumber \\
		&=&\left[\begin{bmatrix}
		z_{1,1}^{(j)}\cdot \xi_1\\
			z_{1,2}^{(j)} \cdot\xi_1\\
			\vdots\\
			z_{1,m}^{(j)}\cdot \xi_1
		\end{bmatrix},\begin{bmatrix}
		z_{2,1}^{(j)} \cdot\xi_2\\
			z_{2,2}^{(j)}\cdot\xi_2\\
			\vdots\\
			z_{2,m}^{(j)} \cdot\xi_2
		\end{bmatrix},\cdots,\begin{bmatrix}
			z_{n,1}^{(j)} \cdot\xi_n\\
			z_{n,2}^{(j)}  \cdot\xi_n\\
			\vdots\\
			z_{n,m}^{(j)} \cdot\xi_n
		\end{bmatrix}\right]\nonumber\\
		&=&\begin{bmatrix}
			[\xi_1,\xi_2,\ldots,\xi_n]\cdot\diag[	z_{1,1}^{(j)},			z_{2,1}^{(j)} ,\ldots,			z_{n,1}^{(j)}]\\
			[\xi_1,\xi_2,\ldots,\xi_n]\cdot\diag[	z_{1,2}^{(j)},			z_{2,2}^{(j)},\ldots,			z_{n,2}^{(j)}]\\
			\vdots\\
			[\xi_1,\xi_2,\ldots,\xi_n]\cdot\diag[	z_{1,m}^{(j)},		z_{2,m}^{(j)},\ldots,			z_{n,m}^{(j)}]
		\end{bmatrix}.
	\end{eqnarray}
	
	Let $U=[\xi_1,\xi_2,\ldots,\xi_n]$ and $T_{k,j}=\diag[z_{1,k}^{(j)},z_{2,k}^{(j)},\ldots,z_{n,k}^{(j)}]$ for $k,j\in\{1,2,\ldots,m\}$. Then we can rewrite Eq.~\eqref{ej} as 
	\begin{equation*}
		E^{(j)}=\begin{bmatrix} U\cdot T_{1,j}\\
			U\cdot T_{2,j}\\
			\vdots\\
			U\cdot T_{m,j}
		\end{bmatrix}=\begin{bmatrix}U&&&\\
			&U&&\\
			&&\ddots&\\
			&&&U\end{bmatrix}\begin{bmatrix}  T_{1,j}\\
			T_{2,j}\\
			\vdots\\
			T_{m,j}
		\end{bmatrix}.
	\end{equation*}
	This implies the following identity:
	\begin{equation}\label{e1n}
		[E^{(1)},E^{(2)},\ldots,E^{(m)}]=\begin{bmatrix}
			U&&&\\
			&U&&\\
			&&\ddots&\\
			&&&U
		\end{bmatrix} \begin{bmatrix}
			T_{1,1}&T_{1,2}&\cdots&T_{1,m}\\
			T_{2,1}&T_{2,2}&\cdots&T_{2,m}\\
			\vdots&\vdots&&\vdots\\
			T_{m,1}&T_{m,2}&\cdots&T_{m,m}
		\end{bmatrix}.
	\end{equation}
	Since each $T_{k,j}$ is a diagonal matrix, one easily sees that the block matrix $(T_{k,j})$ is permutationally similar to the following block diagonal matrix

	\begin{equation*}
		\diag[[\zeta_1^{(1)},\zeta_1^{(2)},\ldots,\zeta_1^{(m)}],[\zeta_2^{(1)},\zeta_2^{(2)},\ldots,\zeta_2^{(m)}],\ldots,[\zeta_n^{(1)},\zeta_n^{(2)},\ldots,\zeta_n^{(m)}]].
	\end{equation*}
	Thus, taking determinants on both sides of Eq.~\eqref{e1n} leads to
	\begin{equation*}
		\det [E^{(1)},E^{(2)},\ldots,E^{(m)}]=(\det U)^m\prod_{1\le i\le n}\det[\zeta_i^{(1)},\zeta_i^{(2)},\ldots,\zeta_i^{(m)}].
	\end{equation*}
	This completes the proof.
\end{proof}
Now we are in a position to present  proofs of Theorem  \ref{main} and Theorem \ref{main2}.

\noindent\textbf{Proof of Theorem \ref{main}}  Note that $e_{mn}^\T=e_m^\T\otimes e_n^\T$ and $\eta_i^{j}=\zeta_i^{(j)}\otimes \xi_i$. By the mixed-product property of Kronecker products, we find that
$$e_{mn}^\T \eta_i^{(j)} =(e_m^\T\otimes e_n^\T)(\zeta_i^{(j)}\otimes \xi_i)=(e_m^\T\zeta_i^{(j)})(e_n^\T\xi_i).$$
Let $\Delta_1=\Res(\phi(M(H);x),\phi(M^{(v)}(H);x))$.
It follows from Corollary \ref{dwt3}, Lemmas \ref{Vanmu}, \ref{ppmu} and \ref{deteta1} that
\begin{align}\label{wvl}
	& \det W_M(G\circ H^{(v)}) \nonumber \\
	=& \frac{\prod\limits_{(i_1,j_1)<(i_2,j_2)}(\mu_{i_2}^{(j_2)}-\mu_{i_1}^{(j_1)})\prod\limits_{(i,j)\in \Omega}(e_{mn}^\T \eta_i^{(j)})}{\det[\eta_1^{(1)},\ldots,\eta_n^{(1)};\ldots;\eta_1^{(m)},\ldots,\eta_n^{(m)}]} \nonumber \\
	=& \pm \frac{\left(\prod\limits_{i=1}^{n}\prod\limits_{1\le j_1<j_2\le m}\left(\mu_i^{(j_2)}-\mu_i^{(j_1)}\right)\right)\left(\prod\limits_{1\le i_1< i_2\le n}(\lambda_{i_2}-\lambda_{i_1})^m\Delta_1\right)\prod\limits_{(i,j)\in \Omega}((e_m^\T\zeta_i^{(j)})(e_n^\T\xi_i))}{\left(\det[\xi_1,\xi_2,\ldots,\xi_n]\right)^m \prod\limits_{i=1}^{n}\det [\zeta_i^{(1)},\zeta_i^{(2)},\ldots,\zeta_i^{(m)}]} \nonumber \\
	=& \pm \Delta_1^{\frac{n(n-1)}{2}} \left( \prod\limits_{i=1}^n \frac{\prod\limits_{1\le j_1<j_2\le m}\left(\mu_i^{(j_2)}-\mu_i^{(j_1)}\right)\prod\limits_{j=1}^{m}(e_m^\T \zeta_i^{(j)})}{\det [\zeta_i^{(1)},\zeta_i^{(2)},\ldots,\zeta_i^{(m)}]} \right) \nonumber \\
	& \quad \times \left( \frac{\prod\limits_{1\le i_1< i_2\le n}(\lambda_{i_2}-\lambda_{i_1})\prod\limits_{1\le i\le n}(e_n^\T \xi_i)}{\det[\xi_1,\xi_2,\ldots,\xi_n]} \right)^m.
\end{align}
Let $\Delta_2$ and $\Delta_3$ denote the last two factors of Eq. \eqref{wvl}, respectively. 
Using Lemma \ref{basicW} for both $W(B(\lambda_i))$ and $W(M(G))$, we find that
\begin{equation*}
	\Delta_2=\prod_{i=1}^n \det W(B(\lambda_i))\text{~and~}	\Delta_3=(\det W(M(G)))^m.
\end{equation*}
Let $h(\lambda)=\det W(B(\lambda))$. Noting that the eigenvalues of  $h(M(G))$ are exactly $h(\lambda_1),\ldots,h(\lambda_n)$, we obtain that 
$$\Delta_2=\prod_{i=1}^n h(\lambda_i)=\det h(M(G)).$$
This completes the proof of Theorem \ref{main}. \hfill\qed

\noindent\textbf{Proof of Theorem \ref{main2}} Let $n$ be even and $G$ be any graph in $\mathcal{F}_n$.  By the definition of $\mathcal{F}_n$, we know that $\det A(G)=\pm 1$ and $\det W_A(G)=\pm 2^{n/2}$. Suppose that $H^{(v)}$ satisfies all the three conditions of Theorem \ref{main2}.  Clearly, by  Corollary \ref{suf}, we have $\det A(G\circ H^{(v)})=\pm 1$. Moreover, using Theorem \ref{main}, we find that 
\begin{equation*}
	\det W_A(G\circ H^{(v)})=\pm \left(\pm 1\right)^{\frac{n(n-1)}{2}}\cdot\det (\pm (A(G))^k)(\pm 2^{n/2})^m=\pm 2^{mn/2}.
\end{equation*}
Thus, $G\circ H^{(v)}\in \mathcal{F}$, i.e.,  $H^{(v)}$ is an $\mathcal{F}$-preserver. This completes the proof of Theorem \ref{main2}.
\section{Some examples and a conjecture}
As an illustration of our main results (Theorems \ref{main} and \ref{main2}), we consider a specific 4-vertex graph $H$ as shown in Fig.~\ref{gv4} and take the vertex $1$ as the root. 
\begin{figure}
	\centering
	\includegraphics[height=3.6cm]{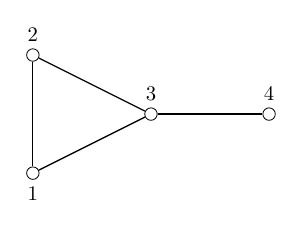}
	\caption{A 4-vertex graph $H$}
	\label{gv4}
\end{figure}
Direct calculations show that $\phi(A(H);x)=x^4-4 x^2-2 x+1$ and $\phi(A^{(1)}(H);x)=x^3-2 x$. The resultant of these two polynomials is equal to the determinant of their Sylvester matrix:

\begin{equation*}
\Res(\phi(A(H);x),\phi(A^{(1)}(H);x))=\det\begin{pmatrix}
		1 & 0 & -4 & -2 & 1 & 0 & 0 \\
		0 & 1 & 0 & -4 & -2 & 1 & 0 \\
		0 & 0 & 1 & 0 & -4 & -2 & 1 \\
		1 & 0 & -2 & 0 & 0 & 0 & 0 \\
		0 & 1 & 0 & -2 & 0 & 0 & 0 \\
		0 & 0 & 1 & 0 & -2 & 0 & 0 \\
		0 & 0 & 0 & 1 & 0 & -2 & 0 \\
\end{pmatrix}=1.
\end{equation*}
As \begin{equation*}
	B(\lambda)=A(H)+\lambda D_1=\left(
	\begin{array}{cccc}
		\lambda & 1 & 1 & 0 \\
		1 & 0 & 1 & 0 \\
		1 & 1 & 0 & 1 \\
		0 & 0 & 1 & 0 \\
	\end{array}
	\right),
\end{equation*}
we obtain 
\begin{equation*}
W(B(\lambda))=[e,B(\lambda)e,(B(\lambda))^2e,(B(\lambda))^3e]=\left(
\begin{array}{cccc}
	1 & \lambda+2 & \lambda^2+2 \lambda+5 & \lambda^3+2 \lambda^2+7 \lambda+10 \\
	1 & 2 & \lambda+5 & \lambda^2+3 \lambda+10 \\
	1 & 3 & \lambda+5 & \lambda^2+3 \lambda+13 \\
	1 & 1 & 3 & \lambda+5 \\
\end{array}
\right)
\end{equation*}
and hence $\det W(B(\lambda))=-\lambda^2$. It follows from  Theorem \ref{main} that, for this specific rooted graph $H^{(1)}$, we have
\begin{equation*}
	\det W_A(G\circ H^{(1)})=\pm (\det A(G))^2(\det W_A(G))^4.
\end{equation*}
Furthermore, as $\det A(H)=1$ and $\det A^{(1)}(H)=0$, we find that all conditions of Theorem \ref{main2} are satisfied. This implies that the graph $H^{(1)}$ is an $\mathcal{F}$-preserver. Thus, if $G$ is a graph in $\mathcal{F}$, then all graphs in the family
\begin{equation*}
	G, G\circ H^{(1)}, (G\circ H^{(1)})\circ H^{(1)},\ldots
\end{equation*} 
are in $\mathcal{F}$ and hence are DGS by the theorem of Wang \cite{wang2017JCTB}.

In \cite{yan2026}, Yan and Wang consider a family of graphs $H_{4t+1}$ of order $4t+1$, as illustrated in Fig. \ref{h4t}. 
\begin{figure}
	\centering
	\includegraphics[height=3cm]{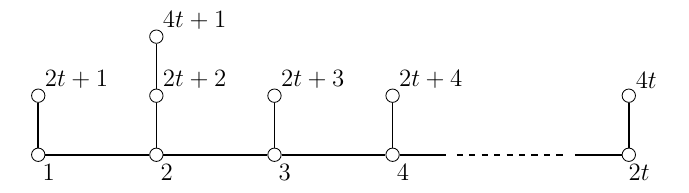}
	\caption{Graph $H_{4t+1}$}
	\label{h4t}
\end{figure}They conjectured that for $\ell =2t+1$ or  $4t+1$, it holds that 
\begin{equation*}
	\det W_A(G\circ H_{4t+1}^{(\ell)})=\pm (\det A(G))^{2t}(\det W_A(G))^{4t+1}.
\end{equation*}
(The two cases $\ell=2t+1$ and $\ell=4t+1$ are essentially the same as the corresponding two rooted graphs are isomorphic.)
Although a complete proof of this conjecture is not available, it can be verified computationally for any fixed integer $t$
based on Theorem 3, through a procedure similar to the one described above. 

In the Appendix, we list all $\mathcal{F}$-preservers with order at most 10 that are found by an exhaustive search based on Theorem \ref{main2}. The SageMath code for this search is available at https://github.com/wangweiAHPU/walk\_det. Remarkably, for all the listed $\mathcal{F}$-preservers $H^{(v)}$, the determinant of $W_A(G\circ H^{(v)})$  
exhibits a unified form dependent solely on the order of 
$H$. We propose the following conjecture for future investigation. 
\begin{conjecture}
	If $H^{(v)}$ is an $\mathcal{F}$-preserver of order $m$, then for any graph $G$, 
	\begin{equation*}
	\det W_A(G\circ H^{(v)})=\pm (\det A(G))^{\lfloor\frac{m}{2}\rfloor}(\det W_A(G))^m.
	\end{equation*}
\end{conjecture}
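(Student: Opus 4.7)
The plan is to reduce the conjecture to two structural claims about any $\mathcal{F}$-preserver $H^{(v)}$: namely \textbf{(A)} $R := \Res(\phi(A(H);x), \phi(A^{(v)}(H);x)) = \pm 1$, and \textbf{(B)} $h(\lambda) := \det W(B(\lambda)) = \pm \lambda^{\lfloor m/2 \rfloor}$. If both hold, then for any graph $G$ we have $\det h(A(G)) = \pm (\det A(G))^{\lfloor m/2 \rfloor}$, and substituting into Theorem \ref{main} immediately yields the conjectured formula.

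First I would establish (A). Pick any $G \in \mathcal{F}_n$ for some large even $n$; such $G$ exists since $\mathcal{F}_n$ is nonempty for all even $n \ge 6$. By the $\mathcal{F}$-preserver hypothesis, $G \circ H^{(v)} \in \mathcal{F}_{mn}$, so $\det W_A(G) = \pm 2^{n/2}$ and $\det W_A(G \circ H^{(v)}) = \pm 2^{mn/2}$. Substituting these into Theorem \ref{main} yields the integer identity
\[
R^{n(n-1)/2} \cdot \det h(A(G)) = \pm 1.
\]
Since $R \in \mathbb{Z}$ (a resultant of monic integer polynomials) and $\det h(A(G)) \in \mathbb{Z}$, and this identity must hold for arbitrarily large $n$, we conclude $|R| = 1$. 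The identity then also gives $\det h(A(G)) = \pm 1$ for every $G \in \mathcal{F}$.

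Next I would approach (B) in two stages. First, combining Proposition \ref{dm} with $\det A(G \circ H^{(v)}) = \pm 1$ should force condition (iii) of Theorem \ref{main2}: if both $\det A(H)$ and $\det A^{(v)}(H)$ were nonzero, then $\det A(G\circ H^{(v)}) = (\det A^{(v)}(H))^n \prod_i(\lambda_i + \det A(H)/\det A^{(v)}(H))$ would depend nontrivially on the spectrum of $A(G)$ and in particular would vanish whenever some $G \in \mathcal{F}$ has $-\det A(H)/\det A^{(v)}(H)$ as an eigenvalue, contradicting the uniform $\pm 1$ value. Second, to show $h(\lambda) = \pm \lambda^k$ for some $k \ge 0$, write $h(\lambda) = c \prod_j (\lambda - \alpha_j) \in \mathbb{Z}[\lambda]$. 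From $\det h(A(G)) = \pm 1$ one obtains $c^n \prod_j \phi(A(G);\alpha_j) = \pm 1$ up to sign; the product is a rational integer, and letting $n$ grow forces $|c|=1$. Then exhibiting, for each putative nonzero root $\alpha_j$, some $G \in \mathcal{F}$ whose spectrum contains $\alpha_j$ produces a vanishing factor and the desired contradiction, yielding $c = \pm 1$ and every $\alpha_j = 0$.

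The main obstacle is pinning down the exponent $k = \lfloor m/2 \rfloor$. A direct attack would expand $B(\lambda)^j e$ in powers of $\lambda$ using $B(\lambda) = A(H) + \lambda e_v e_v^\T$; every column of $W(B(\lambda))$ with $j \ge 1$ shares the leading term $\lambda^j e_v$, so substantial cancellation must occur before the true degree of $h(\lambda)$ emerges. The goal would be to find a closed-form combinatorial identity for $\det W(B(\lambda))$—perhaps expressed via matching polynomials, via principal minors of $\phi(A(H);x)$, or via walk generating functions rooted at $v$—that exhibits the exponent $\lfloor m/2 \rfloor$ explicitly. The path case $H = P_m$ treated in \cite{wym2024, yan2026} provides a guiding pattern, but extending those combinatorial arguments to arbitrary $\mathcal{F}$-preservers will likely demand a new identity for $\det W(B(\lambda))$ of independent interest, together with the richness-of-spectra input required to make the arithmetic arguments in the previous paragraph rigorous.
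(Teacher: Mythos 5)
First, note that the paper offers no proof of this statement: it is posed explicitly as an open conjecture (supported only by the computational search behind the table of $\mathcal{F}$-preservers), so there is no proof in the paper to compare yours against. Your reduction is nevertheless the natural one: granting Theorem \ref{main}, the conjecture follows from the two structural claims (A) $\Res(\phi(A(H);x),\phi(A^{(v)}(H);x))=\pm 1$ and (B) $h(\lambda)=\pm\lambda^{\lfloor m/2\rfloor}$ for every $\mathcal{F}$-preserver $H^{(v)}$, and your derivation of (A) is correct and complete: plugging a single $G\in\mathcal{F}_6$ into Theorem \ref{main} already gives $R^{n(n-1)/2}\cdot\det h(A(G))=\pm 1$ with both factors rational integers, whence $|R|=1$ (you do not even need $n\to\infty$).

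Your treatment of (B), however, has genuine gaps, two of which are fatal as written. First, the argument that $h$ has no nonzero roots relies on ``exhibiting, for each putative nonzero root $\alpha_j$, some $G\in\mathcal{F}$ whose spectrum contains $\alpha_j$.'' Nothing of the sort is available: $\mathcal{F}$ is a thin, arithmetically constrained family whose achievable spectra are not understood, and worse, a nonzero root $\alpha_j$ of $h\in\mathbb{Z}[\lambda]$ need not be a totally real algebraic integer at all (the leading coefficient $c$ of $h$ is not known to be $\pm1$ a priori), in which case no graph whatsoever has $\alpha_j$ as an eigenvalue and the ``vanishing factor'' strategy cannot even get started. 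What you actually know is only that $\Res(h(\lambda),\phi(A(G);\lambda))=\pm1$ for all $G\in\mathcal{F}$, and extracting $h=\pm\lambda^k$ from that requires a divisibility or norm argument you have not supplied; likewise ``letting $n$ grow forces $|c|=1$'' is unsupported, since $\prod_j\phi(A(G);\alpha_j)$ is only a rational number with denominator a power of $c$, not an integer that can be separated from the factor $c^n$. The same objection applies to your derivation of condition (iii): that $(-b)^n\phi(A(G);-a/b)$ ``would vanish for some $G\in\mathcal{F}$'' is asserted, not proved. Second, and by your own admission, the identification $k=\lfloor m/2\rfloor$ --- which is the actual content of the conjecture once (A) and $h=\pm\lambda^k$ are granted --- is left entirely open; you sketch only possible directions (matching polynomials, walk generating functions) without an argument. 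In sum, the proposal is a sensible reduction plus a correct proof of one of the needed facts, but it does not prove the conjecture, which remains open.
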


\section*{Declaration of competing interest}
There is no conflict of interest.
\section*{Acknowledgments}
This work is partially supported by the National Natural Science Foundation of China (Grant Nos. 12001006 and 12101379) and  Wuhu Science and Technology Project, China (Grant No.~2024kj015).

\clearpage
\section*{Appendix}

\setlength{\tabcolsep}{0pt} 

\renewcommand{\arraystretch}{0.1} 

\def\TableContent{}
\foreach \n in {1,...,97}{
	\ifnum\n<10
	\xdef\CurrentFile{graph_pdfs/F0\n.pdf}
	\else
	\xdef\CurrentFile{graph_pdfs/F\n.pdf}
	\fi
	\xappto\TableContent{\noexpand\includegraphics[width=\linewidth]{\CurrentFile}}
	\ifnum\n=97
	\else
	\ifnum\numexpr\n-5*(\n/5)\relax=0
	\gappto\TableContent{\\ \noalign{\vspace{1pt}}} 
	\else
	\gappto\TableContent{&} 
	\fi
	\fi
}

\begin{longtable}{*{5}{>{\centering\arraybackslash}p{0.19\textwidth}}}
	
	\caption{A list of $\mathcal{F}$-preservers of order $\le 10$} \label{tab:f-preservers}\\ 
	\multicolumn{5}{c}{\small (Solid circles denote root  vertices)} \\[2ex] 
	\endfirsthead
	
	
	\caption[]{A list of $\mathcal{F}$-preservers of order $\le 10$ (Continued)} \\
	\endhead
	
	\multicolumn{5}{r}{\scriptsize\textit{(To be continued on next page)}} \\
	\endfoot
	
	\endlastfoot
	
	\TableContent
	
\end{longtable}
\end{document}